\numberwithin{equation}{section}
\theoremstyle{plain}
\newtheorem{theorem}{Theorem}[section]
\newtheorem{lem}[theorem]{Lemma}
\newtheorem{prop}[theorem]{Proposition}
\newtheorem{cor}[theorem]{Corollary}
\theoremstyle{definition}
\newcommand{\inn}[1]{ \langle #1 \rangle }
\newcommand{\R}{\mathbb{R}}
\newcommand{\Z}{\mathbb{Z}}
\newcommand{\Sp}{\mathbb{S}}
\newcommand{\M}{\mathcal{M}}
\newcommand{\Ny}{\mathcal{N}}
\newcommand{\cT}{\mathcal{T}}
\newcommand{\txi}{\tilde \xi}
\newcommand{\wh}{\widehat}
\newcommand{\om}{{\Omega}}
\newcommand{\cO}{{\mathcal{O}}}
\newcommand{\vp}{\varphi}
\newcommand{\les}{\lesssim}
\newcommand{\ges}{\gtrsim}
\newcommand{\norm}[1]{ || #1 ||}
\newcommand{\ind}{\mathbf{1}}
\newcommand{\sgn}{\operatorname{sgn}}
\newcommand{\dist}{\operatorname{dist}}
   \def\MR#1{}
\begin{document}
\title[Directional maximal functions]{Almost-orthogonality principles for certain directional maximal functions}
\author{Jongchon Kim}
\address{Department of Mathematics, University of British Columbia, 1984 Mathematics Road, Vancouver, BC, Canada V6T 1Z2}
\email{jongchon.kim.work@gmail.com}

\begin{abstract}
We develop almost-orthogonality principles for maximal functions associated with averages over line segments and directional singular integrals. Using them, we obtain sharp $L^2$-bounds for these maximal functions when the underlying direction set is equidistributed in $\Sp^{n-1}$.
\end{abstract}

\maketitle

\section{Introduction}
This paper is concerned with $L^2$-estimates for certain maximal functions associated with a set of direction $\om \subset \Sp^{n-1}$. For Nikodym and Kakeya maximal functions associated with averages over rectangles of bounded eccentricities, $L^2$-estimates are classical; see e.g. \cite{Cor, Str, Bo, Bar}. 

The first maximal function considered in this paper is associated with averages over line segments in a finite set of directions $\om\subset \Sp^{n-1}$: 
\[ M_{\om} f(x) = \sup_{v\in \om}  M_v f(x), \text{ where } \; M_v f(x) = \sup_{h>0} \frac{1}{2h}\int_{-h}^{h} |f(x-vt)| dt . \]
The second maximal function is a singular integral variant of $M_{\om}$. Suppose that $m \in C^\infty(\R\setminus \{0\})$ satisfies $|m^{(\alpha)}(\xi)| \leq C_\alpha |\xi|^{-\alpha}$ for all $\alpha\geq 0$. We consider a maximal function $T_\om$ associated with the directional singular integral $T_v$ given by $\widehat{T_v f}(\xi) = m(v\cdot \xi) \wh{f}(\xi)$ :
\[ T_\om f (x)= \sup_{v\in \om} |T_v f(x)|.\]
When $m(\xi) = -i\sgn(\xi)$, $T_v$ is the directional Hilbert transform. We shall denote by $H_\om$ the maximal function $T_\om$ associated with this particular $m$.

The main goal of this paper is to develop almost-orthogonality principles for $M_\om$ and $T_\om$. They quantify the contribution to the $L^2$-operator norm of these maximal operators from different parts of the direction set $\om$ and facilitate a divide and conquer argument. In $\R^2$, such results for $M_\om$ were obtained by Alfonseca, Soria, and Vargas \cite{ASV,ASV2,Alf}. We develop weaker versions for $M_\om$ and $T_\om$ which work in every dimension. As a corollary, we obtain sharp $L^2$-estimates for these maximal operators when $\om$ is equidistributed.

We say that $\om \subset \Sp^{n-1}$ is equidistributed if there is $0<\delta<1$ such that $\om$ is a maximal $\delta$-separated set of points in $\Sp^{n-1}$. In what follows, we denote by $\norm{T}_{L^p(\R^n)}$ the $L^p$-operator norm of an operator $T$ and write $A\les B$ to indicate that there is an absolute constant $C>0$ such that $A \leq C B$.
\begin{theorem}\label{thm:sin}
Let $n\geq 3$. Assume that $\om\subset \Sp^{n-1}$ is equidistributed. Then
\begin{align*}
\norm{M_{\om} }_{L^2(\R^n)} \les  (\# \om)^{\frac{n-2}{2(n-1)}}  \quad \text{ and } \quad
 \norm{T_{\om} }_{L^2(\R^n)} \les (\# \om)^{\frac{n-2}{2(n-1)}}.
\end{align*}
\end{theorem}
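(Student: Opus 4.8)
We sketch the approach; throughout, recall that since $\om$ is a maximal $\delta$-separated set we have $\#\om\approx\delta^{-(n-1)}$, so both asserted inequalities amount to $\norm{M_\om}_{L^2(\R^n)}\les\delta^{-(n-2)/2}$ and $\norm{T_\om}_{L^2(\R^n)}\les\delta^{-(n-2)/2}$, which is what we aim for. We treat $T_\om$ as the model operator; $M_\om$ is handled the same way (in fact slightly more easily) after replacing the averaging multipliers $\mathrm{sinc}(h\,v\cdot\xi)$ by a smooth $L^1$-normalized family, the passage between the two being routine. The first step is a Littlewood--Paley reduction: writing $f=\sum_k P_k f$ with $\wh{P_k f}$ supported in $|\xi|\sim2^k$, the vector-valued Littlewood--Paley inequality together with the fact that $T_v$ commutes with $P_k$ up to a term controlled by the Hardy--Littlewood maximal function and a rapidly decaying tail reduces matters to the uniform estimate $\norm{T_\om P_k f}_{L^2(\R^n)}\les\delta^{-(n-2)/2}\norm{P_k f}_{L^2(\R^n)}$. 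By dilation invariance we may assume $k=0$, i.e.\ $\wh f$ is supported in $|\xi|\sim1$.

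The heart of the matter is an almost-orthogonality splitting of $T_\om$ according to the position of the frequency $\xi$ relative to the hyperplane $v^\perp$. Decompose the symbol dyadically in the variable $t=v\cdot\xi$, writing $m(t)=\sum_{j\ge0}m_j(t)$ on $|t|\les1$, with $m_j$ supported in $|t|\sim2^{-j}$ and $|m_j^{(\alpha)}|\les2^{j\alpha}$; then $T_v^{(j)}:=\mathrm{Op}(m_j(v\cdot\xi))$ is, up to Schwartz tails, an average of $f$ over a segment of length $\sim2^j$ in the direction $v$. For $2^{-j}\les\delta$ the plate $\{|v\cdot\xi|\les2^{-j}\}$ is finer than the direction spacing, and I gather those pieces into a single operator $S^\perp_\om f(x)=\sup_{v\in\om}|\mathrm{Op}(\ind_{|v\cdot\xi|\les\delta})f(x)|$. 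For $S^\perp_\om$ Plancherel and the trivial bound $\norm{\sup_v|\cdot|}_2^2\le\sum_v\norm{\cdot}_2^2$ give
\[
\norm{S^\perp_\om f}_{L^2(\R^n)}^2\les\sum_{v\in\om}\norm{\ind_{|v\cdot\xi|\les\delta}\,\wh f}_{L^2}^2\les\Big(\sup_\xi\#\{v\in\om:|v\cdot\xi|\les\delta\}\Big)\norm f_{L^2}^2\les\delta^{-(n-2)}\norm f_{L^2}^2,
\]
because a unit vector $\xi$ is $\delta$-close to $v^\perp$ for at most $\approx\delta\cdot\#\om\approx\delta^{-(n-2)}$ of the $\delta$-separated directions $v$. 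For a fixed dyadic scale $j$ with $\delta\les2^{-j}\les1$, consider $S^{(j)}_\om f(x)=\sup_{v\in\om}|T_v^{(j)}f(x)|$; grouping $\om$ into caps of radius $\sim2^{-j}$, within each cap the direction $v$ and the plate $\{|v\cdot\xi|\sim2^{-j}\}$ move within the uncertainty scale, so $S^{(j)}_\om$ is comparable to the analogous operator for a $2^{-j}$-separated set of directions, which (using the Schwartz decay of the kernels of $T_v^{(j)}$) is pointwise dominated by a Nikodym-type maximal function over $2^{-j}\times\cdots\times2^{-j}\times1$ tubes in $2^{-j}$-separated directions.

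For this single-scale operator the classical Córdoba-type $L^2$ argument applies: expanding $\big\|\sum_\tau\ind_{T_\tau}\big\|_{L^2}^2=\sum_{\tau,\tau'}|T_\tau\cap T_{\tau'}|$ and estimating $|T_\tau\cap T_{\tau'}|\approx(2^{-j})^n/|c_\tau-c_{\tau'}|$, the contribution of pairs at angular separation $\sim2^{\ell}\cdot2^{-j}$ produces a geometric series $\sum_{\ell\ge0}2^{-\ell(n-2)}$, which \emph{converges precisely because $n\ge3$}; this yields $\norm{S^{(j)}_\om}_{L^2(\R^n)}\les2^{j(n-2)/2}$. Finally $T_\om f\le S^\perp_\om f+\sum_{j}S^{(j)}_\om f$, and since $2^{j(n-2)/2}$ is geometric in $j$, the $\ell^1$-sum over $0\le j\les\log(1/\delta)$ is dominated by the top scale $j\approx\log(1/\delta)$, giving $\sum_j\norm{S^{(j)}_\om f}_{L^2}\les\delta^{-(n-2)/2}\norm f_{L^2}$ with no extra logarithm; combined with the bound for $S^\perp_\om$ this closes the $k=0$ case and hence the theorem.

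The step I expect to be the real obstacle — and where the ``weaker version'' caveat of the introduction bites — is making the second paragraph rigorous: one must (i) control the interaction between the $\delta$-fine direction set and the coarser $2^{-j}$ uncertainty scale so that the comparison of $S^{(j)}_\om$ with a genuine $2^{-j}$-scale Nikodym operator is lossless; (ii) handle the Schwartz tails of the non-compactly-supported kernels of the $T_v^{(j)}$ when passing the supremum over $v$ through them; and (iii) verify that the various scales truly decouple after the initial Littlewood--Paley localization, so that no logarithmic factor creeps in beyond the harmless summation over $j$. For $M_\om$ one additionally needs that the continuous supremum over $h$ in each dyadic range $h\sim2^j$ costs nothing more than a square function in $j$, which is exactly where the smoothness of the averaging kernels (rather than the sharp cutoff $\tfrac1{2h}\ind_{[-h,h]}$) is used. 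The hypothesis $n\ge3$ is essential and enters only through the convergence of the angular geometric series above; in $\R^2$ that series is harmonic, which is the source of the genuine logarithmic growth in the plane.
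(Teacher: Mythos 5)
Your proposal has a genuine gap at its very first step, and it is exactly the gap that this theorem was designed to close. The reduction ``write $f=\sum_k P_kf$ and reduce to the uniform single-scale estimate $\norm{T_\om P_kf}_2\les\delta^{-(n-2)/2}\norm{P_kf}_2$'' is not lossless for a maximal operator over directions: pointwise one only has $\sup_v|\sum_k T_vP_kf|\le\sup_v\sum_k|T_vP_kf|$, and passing from this $\ell^1_k$ sum inside the supremum to the $\ell^2_k$ square function $(\sum_k\sup_v|T_vP_kf|^2)^{1/2}$ (which is what the vector-valued Littlewood--Paley inequality would let you recombine) requires the Chang--Wilson--Wolff inequality and costs a factor $\sqrt{\log\#\om}\sim\sqrt{\log(1/\delta)}$. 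This is precisely the source of the $\epsilon$-loss (or logarithmic loss) in the prior bounds of Demeter and Di Plinio--Parissis for the single-scale operator $M^{single}_\om$ quoted in the introduction; your single-scale Cordoba-type estimate $\norm{S^{(j)}_\om}_2\les 2^{j(n-2)/2}$ is essentially their (known) input, and the open issue was always the recombination over frequency scales. You flag this yourself as obstacle (iii) but offer no mechanism to overcome it, so the argument as written proves only the already-known bound with an extra $\sqrt{\log(1/\delta)}$ or $\delta^{-\epsilon}$. The same problem recurs for $M_\om$, where the supremum over $h>0$ ranges over all scales simultaneously.

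The paper's route is structurally different and is built to avoid this loss: it inducts on the \emph{separation scale} of the direction set rather than localizing in frequency. One covers $\om$ by caps $O_j$ of radius $C\delta$ with representatives $\cO$, and the almost-orthogonality principle (Theorem \ref{thm:ortho}) gives $\norm{\M_\om}_2\les\norm{\M_\cO}_2+\sup_j\norm{\M_{\om_j}}_2\sqrt{E_0}$, where $E_0\les(\eta/C\delta)^{n-2}$ is a plate-overlap count of the same flavor as your $S^\perp_\om$ computation. The coarser set $\cO$ is handled by the induction hypothesis, and the factor $C^{-(n-2)/2}$ gained in passing from $\delta$ to $C\delta$ absorbs the implicit constants, so the recursion closes with no logarithm. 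Crucially, Chang--Wilson--Wolff is invoked only for the tiny subsets $\om_j$ with $\#\om_j=O(1)$, where $\sqrt{\log\#\om_j}$ is harmless. If you want to salvage your approach, you would need either to import the paper's induction-on-scales scheme or to find a genuinely loss-free way to decouple frequency annuli under the directional supremum; the latter is not available by standard square-function technology.
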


Both bounds in Theorem \ref{thm:sin} are sharp in general. To see this, one may test $M_\om$ and $H_\om$ to the characteristic function of a ball. The sharp upper bound for $\norm{H_\om}_{L^2(\R^n)}$ for equidistributed $\om$ is due to Kim \cite{Kim}. 

Before we discuss earlier results in $\R^2$, we mention a trivial bound. For each $v\in \Sp^{n-1}$, $M_v$ and $T_v$ are $L^p(\R^n)$-bounded for any $1<p\leq \infty$ and any $1<p<\infty$, respectively. This follows from the boundedness of the Hardy-Littlewood maximal function and the classical Mikhlin multiplier theorem. Using the embedding $l^p \hookrightarrow l^\infty$, one obtains a trivial bound $O((\# \om)^{1/p})$ for $\norm{M_\om}_{L^p(\R^n)}$ and $\norm{T_\om}_{L^p(\R^n)}$. The main problem is to obtain sharp bounds; it is conjectured that they are $O_\epsilon((\# \om)^{\epsilon})$ for any $\epsilon>0$ when $p=n$. See \cite{Kim, Dem0, Dem} for calculations yielding the sharpness of these bounds.

In $\R^2$, Str\"{o}mberg \cite{Str} obtained the sharp bound $\norm{M_{\om}}_{L^2(\R^2)}= O(\log \# \om)$ for equidistributed sets of directions $\om$, improving an earlier result by Cordoba \cite{Cor,CorP}. Katz \cite{Katz} proved that the same bound is valid for arbitrary finite $\om \subset \Sp^1$. Another proof of that result was given by Alfonseca-Soria-Vargas \cite{ASV,ASV2} based on an almost-orthogonality principle for $M_\om$. See \cite{Wainger, Carbery, Alf} for relevant works, \cite[Chapter X]{Stein} for an overview of maximal averaging operators and \cite{NSW, Car_Lac, Bateman, Parcet-Rogers} for $L^p(\R^n)$-estimates for $M_\om$ when $\om$ is lacunary.

A singular integral analogue of Katz's result in $\R^2$ was obtained by Christ-Duoandikoetxea-Rubio de Francia \cite{CDR} and Kim \cite{Kim}, independently, for the maximal directional Hilbert transform $H_\om$, and by Demeter \cite{Dem0} more generally for $T_\om$. The paper \cite{Dem0} uses a weak type $(2,2)$ estimate for the maximal function $f \mapsto \sup_{v\in S^1} |T_v L_k f|$ due to Lacey-Li \cite{LL}, where $L_k$ is a Littlewood-Paley projection operator to the frequency $|\xi| \sim 2^k$. See \cite{DD,DP,DP2, ADP} for further results in $\R^2$ and sharp estimates for lacunary direction sets in $\R^n$.

In higher dimensions $\R^n$, much less is known about $\norm{M_\om}_{L^p(\R^n)}$ and $\norm{T_\om}_{L^p(\R^n)}$. Indeed, we are not aware of any sharp result for $p> 2$ when $\om$ is equidistributed. This is in contrast with advances on $L^p$-estimates for the Kakeya and Nikodym maximal functions, starting from the works \cite{CDR, Bo, Wolff}. See also the papers \cite{Bar, Dem, DPavg}, where maximal functions slightly weaker than $M_\om$ were considered. For equidistributed $\om$, these papers provide $L^2$-bounds which are sharp up to an additional logarithmic factor. 

In particular, for the single scale maximal averaging operator
\[ M^{single}_{\om} f(x) = \sup_{v\in \om}  \int_{-1/2}^{1/2} | f(x-vt)| dt, \]
Demeter \cite{Dem} and Di Plinio-Parissis \cite{DPavg} established a nearly sharp bound for \emph{arbitrary} finite sets $\om \subset \Sp^{n-1}$; for any $\epsilon>0$\,
\begin{equation}\label{eqn:dp}
 \norm{M^{single}_{\om}}_{L^2(\R^n)} \leq C_\epsilon  (\# \om)^{\frac{n-2}{2(n-1)}+\epsilon}
\end{equation}
with a logarithmic refinement for $n=3$. An analogue of \eqref{eqn:dp} for $H_\om$ was obtained by the author and Pramanik \cite{KP}. In addition, the papers \cite{DPavg, KP} show that bounds for $M^{single}_\om$ and $H_\om$ can be improved when $\om$ is contained in a subvariety of $\Sp^{n-1}$. It would be interesting to extend Theorem \ref{thm:sin} so that it includes the above mentioned results. 

The proof of Theorem \ref{thm:sin} is by induction on the scale $\delta$ based on almost-orthogonality principles, Theorem \ref{thm:ortho} below. For the almost-orthogonality principle, it is convenient to work with a variant of $M_\om$. We fix a smooth function $\phi$ such that $\phi \ges \ind_{[-1,1]}$ and $\wh{\phi}$ is supported on $[-1,1]$. For each $v\in \Sp^{n-1}$ and $h>0$, we consider the averaging operator 
\[ A_{v,h} f(x) = \frac{1}{h}  \int f(x-vt) \phi(t/h) dt \]
and the associated maximal function
\[ \M_{\om} f(x) = \sup_{v\in \om}   \M_{v} f(x), \;\; \text{ where } \; \M_v f(x) = \sup_{h>0} |A_{v,h} f(x)|. \]
It follows that $M_\om f(x) \les \M_{\om} |f|(x)$, so it is sufficient to study $\norm{\M_{\om}}_{L^2}$ for the proof of Theorem \ref{thm:sin}. 

A basic setup common to almost-orthogonality principles for both $\M_\om$ and $T_\om$ is the following. Given $\om \subset \Sp^{n-1}$, let $\{O_j\}$ be subsets of $\Sp^{n-1}$  covering $\om$ with the diameter $d(O_j) = \delta_j\leq 1$. We let $\om_j = \om \cap O_j$ and denote by $\cO = \{v_j \}$ a collection consisting of exactly one $v_j$ from each $O_j$. In the following, we fix a constant $0<c \leq 1$ and allow implicit constants depend on $c$. For each $l\geq 0$, let
\[ E_l = E_l(c, \{\delta_j\})= \sup_{w \in \Sp^{n-1}} \# \{ j: \dist(v_j,w^{\perp}) \leq (1+c)2^l \delta_j \},\] 
where $w^{\perp} = \{ \xi \in \R^n : w \cdot \xi = 0 \}$. 

\begin{theorem} \label{thm:ortho}
Let $n\geq 2$ and $\om$, $\{O_j\}$, $\cO$ and $E_l$ be as above. Then
\begin{align}
\label{eqn:orthoM}
\norm{\M_\om}_{L^2} &\les \norm{\M_\cO}_{L^2} +  \sup_j \norm{ \M_{\om_j} }_{L^2}\sqrt{E_0}, \\
\label{eqn:orthoT}
 \norm{T_\om}_{L^2} &\leq \norm{T_\cO}_{L^2} + C \sup_j (\norm{T_{\om_j} }_{L^2}+ \sqrt{ \log \# \om_j }) \sum_{l\geq 0} 2^{-l} \sqrt{E_l}. 
\end{align}

\end{theorem}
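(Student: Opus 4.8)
The plan is to prove both almost-orthogonality estimates by decomposing the Fourier side according to how close a frequency $\xi$ is to the hyperplanes $v_j^\perp$, exploiting the fact that the averaging operator $A_{v,h}$ and the directional singular integral $T_v$ are, up to small error, ``localized'' to the slab $|v\cdot \xi| \les 1/h$ respectively behave like a function of $v\cdot \xi$ that is smooth away from $v\cdot\xi = 0$. The heuristic is that $\M_{\om_j}$ (resp. $T_{\om_j}$) only sees the part of $f$ whose frequency support is within angular distance $\sim\delta_j$ of $v_j^\perp$, and the ``error'' from replacing $v\in\om_j$ by the representative $v_j$ can be controlled by the single-direction operator $\M_{v_j}$ (resp. $T_{v_j}$) applied to a rougher cutoff. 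The quantity $E_l$ counts the multiplicity of the overlap of these slabs, which is exactly what governs the $\ell^2$-orthogonality loss.

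\textbf{Setup for $\M_\om$.} For each $j$, I would introduce a smooth Fourier multiplier $P_j$ that is $\equiv 1$ on the slab $\{|v_j\cdot \xi| \le (1+c/2)\delta_j |\xi|\}$ (the ``bad'' cone around $v_j^\perp$) and supported in $\{|v_j\cdot\xi| \le (1+c)\delta_j |\xi|\}$, together with a ``good'' part $Q_j = I - P_j$. Writing $\M_\om f \le \sup_j \M_{\om_j}(P_j f) + \sup_j \M_{\om_j}(Q_j f)$, the first piece is estimated by $(\sum_j \|\M_{\om_j} P_j f\|_2^2)^{1/2} \le \sup_j\|\M_{\om_j}\|_{L^2}\,(\sum_j\|P_j f\|_2^2)^{1/2}$, and the Plancherel bound $\sum_j \|P_j f\|_2^2 \les E_0 \|f\|_2^2$ follows from the overlap bound on the supports of $\wh{P_j}$ and the definition of $E_0$. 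For the ``good'' piece I would show pointwise that for $v\in\om_j$ and any $h>0$, $A_{v,h}(Q_j f)(x)$ is controlled, uniformly in $v\in\om_j$, by $\M_{v_j}(\tilde Q_j f)(x)$ (plus a rapidly decaying tail), where $\tilde Q_j$ is a slightly enlarged version of $Q_j$; the key point is that on the support of $\wh{Q_j}$ the angle between $v$ and $v_j$ is $\les\delta_j$, so that $|v\cdot\xi|\sim|v_j\cdot\xi|$ and $\wh{\phi}(h\,v\cdot\xi)$ can be compared to a bump adapted to $v_j$, with the transition handled by a Taylor expansion / kernel comparison using $\phi$'s Schwartz decay. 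This reduces $\sup_j\M_{\om_j}(Q_j f)$ to $\M_\cO$ applied to a bounded-overlap family, giving the term $\|\M_\cO\|_{L^2}$.

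\textbf{Setup for $T_\om$.} Here the multiplier $m(v\cdot\xi)$ is not compactly supported in the slab, so I would use a Littlewood–Paley-type decomposition of $m$ at scales dyadic in $|v\cdot\xi|$. Write $m(v\cdot\xi) = \sum_{l\ge 0} m_l(v\cdot\xi) + m_{-}(v\cdot\xi)$ where $m_l$ is supported where $|v\cdot\xi|\sim 2^l\delta_j|\xi|$ (for the scale attached to $O_j$) and $m_-$ is supported in the innermost slab $|v\cdot\xi|\les\delta_j|\xi|$; each $m_l$ satisfies Mikhlin bounds with constants uniform in $l$ after rescaling, and the number of relevant $l$ before one exits the relevant frequency range is $\les\log(1/\delta_j)$, which is where the $\sqrt{\log\#\om_j}$ term enters (roughly $\#\om_j\sim$ a power of $1/\delta_j$). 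For each $l$, the corresponding piece localizes $\wh f$ to the enlarged slab around $v_j^\perp$ of width $\sim 2^l\delta_j$; the overlap of these slabs over $j$ has multiplicity $\les E_l$, giving $\sum_j\|(\text{slab projection}_{j,l}) f\|_2^2\les E_l\|f\|_2^2$. Summing over $l$ with the square-function/triangle inequality and the $2^{-l}$ gain coming from the decay of $m_l$ (or rather from a telescoping of the maximal function over $l$-dyadic truncations of the multiplier, analogous to the $\M_\om$ argument) produces $\sum_{l\ge 0} 2^{-l}\sqrt{E_l}$. The innermost part $m_-$ and the comparison of $v$ to $v_j$ within each $\om_j$ are handled exactly as in the $\M_\om$ case, yielding $\|T_\cO\|_{L^2}$.

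\textbf{Main obstacle.} The delicate step is the pointwise/kernel comparison that lets one replace $v\in\om_j$ by the representative $v_j$ after applying the ``good'' cutoff — i.e. showing that $\sup_{v\in\om_j}|A_{v,h}(Q_j f)|$ (resp. $\sup_{v\in\om_j}|T_v(m_l\text{-piece})f|$) is dominated by a single maximal operator $\M_{v_j}$ (resp. $T_{v_j}$) acting on a mildly modified $f$, uniformly over the (possibly large) set $\om_j$. Both $h$ and $v$ vary, and one must convert the angular closeness $|v - v_j|\les\delta_j$ into an honest domination of kernels that is uniform in the supremum; this requires being careful that the enlargement of the multiplier is by a fixed factor (absorbed into the constant $c$) and that the Schwartz tails sum to something harmless. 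Getting the $2^{-l}$ weights in \eqref{eqn:orthoT} — rather than an unweighted sum over $l$, which would diverge — is the other point requiring care, and I expect it to come from exploiting that the supremum over $h$ of the $l$-th dyadic piece of $T_v$ is itself bounded with a gain $2^{-l}$ (using that $m$ is a classical symbol, its dyadic pieces $m_l$ are $O(2^{-l})$ in a suitable sense once one accounts for the maximal truncation), combined with summing a geometric-in-$l$ series against $\sqrt{E_l}\le\sqrt{E_0}\,2^{l(n-1)}$-type crude bounds only after the genuine decay has been extracted.
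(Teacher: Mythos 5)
Your treatment of $\M_\om$ is essentially the paper's argument in equivalent form: the paper splits $A_{v,h}=A_{v,h}\vp(h\delta D)+A_{v,h}(I-\vp(h\delta D))$ and observes that the high-frequency part is automatically supported in the cone $\{|v\cdot\xi|\le\delta|\xi|\}$, while you split by the cone first and observe that the complement is automatically low-frequency relative to $h$; either way the cone piece carries $\M_{\om_j}$ with overlap $E_0$ and the other piece is transferred to the representative $v_j$ via comparability of $\delta_j$-eccentric tubes in directions $v$ and $v_j$ (the paper makes this precise through a Nikodym-type maximal function). That half is fine.

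The $T_\om$ half has genuine gaps. First, the $2^{-l}$ decay cannot come from ``the decay of $m_l$'': the dyadic conic pieces of a Mikhlin symbol are $O(1)$, not $O(2^{-l})$ (for $m=-i\sgn$ every piece has modulus exactly $1$), so an unweighted sum over the $\sim\log(1/\delta_j)$ shells diverges in exactly the way you were trying to avoid. The gain in the paper comes from the \emph{difference} $m(v\cdot\xi)-m(v_j\cdot\xi)$ restricted to the shell $|v_j\cdot\xi|\sim 2^l\delta_j|\xi|$: there $|(v-v_j)\cdot\xi|\les\delta_j|\xi|$ is a factor $2^{-l}$ smaller than $|v_j\cdot\xi|$, and the mean value theorem together with $|m'(t)|\les|t|^{-1}$ gives $|m(v\cdot\xi)-m(v_j\cdot\xi)|\les 2^{-l}$. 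So you must decompose $T_v=T_{v_j}+\sum_l[T_v-T_{v_j}]R_{W^l_j}$, not $m$ itself. Second, you have misattributed the $\sqrt{\log\#\om_j}$: it is not the count of shells $l$ (and $\#\om_j$ need not be comparable to any power of $1/\delta_j$, since $\om_j$ is an arbitrary finite subset of $O_j$). It arises from a Chang--Wilson--Wolff square-function reduction for the maximal operator over the finite set $\om_j$, which costs $(\log\#V)^{1/2}$ with $V=\om_j$. Third, and relatedly, you have no mechanism for controlling $\sup_{v\in\om_j}|[T_v-T_{v_j}]R_{W^l_j}f|$ uniformly: the trivial $\ell^2\hookrightarrow\ell^\infty$ bound loses $\sqrt{\#\om_j}$, and the positive-kernel comparison you used for $\M_\om$ is unavailable for signed singular kernels. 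The paper resolves this by combining the CWW reduction to Littlewood--Paley pieces $L_kf$ with a pointwise kernel estimate $|[T_v-T_{v_j}]R_{W^l_j}L_kf|\les 2^{-l}M^{str}_{v_j}R_{W^l_j}L_kf$, uniform in $v\in\om_j$, proved by integration by parts on a rectangle of dimensions $2^{-k}(1\times\cdots\times1\times(2^l\delta_j)^{-1})$ adapted to $v_j$. Without these three ingredients the $l\ge1$ terms of \eqref{eqn:orthoT} are not under control.
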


For a maximal $\delta$-separated set of points $\om$ in $\mathbb{S}^1$, \eqref{eqn:orthoT} recovers the sharp $O(\log ( \# \om))$ bound for $\norm{T_\om}_{L^2}$ from \cite{Dem0} by a simple induction argument. Indeed, one can choose disjoint arcs $\{ O_j \}$ covering $\om$ so that $\# \om_j \sim 1$ and $\cO$ is a maximal $5\delta$-separated set of points in $\mathbb{S}^1$. We recall that the previously known proof \cite{Dem0} relies on a result \cite{LL} which is known to imply Carleson's theorem on the pointwise convergence of Fourier series.

In addition to Theorem \ref{thm:sin}, Theorem \ref{thm:ortho} may yield new results for certain non-equidistributed direction sets which have both lacunary and equidistributed features, using known results for lacunary direction sets \cite{Parcet-Rogers, ADP}. We state it as a corollary.
\begin{cor} \label{cor:mixed} Let $n\geq 3$ and $0<\delta<1$. Consider a finitely many overlapping  cover $\{O_j\}$ of $\Sp^{n-1}$ consisting of caps of diameter $\delta$. Suppose that $\om = \cup_j \om_j$ and $\om_j$ is a lacunary direction set (as defined in \cite{Parcet-Rogers}) contained in $O_j$.
Then
\begin{align*}
\norm{M_{\om} }_{L^2(\R^n)} &\les  \delta^{-\frac{n-2}{2}}  \\
 \norm{T_{\om} }_{L^2(\R^n)} &\les \delta^{-\frac{n-2}{2}} \max_j \sqrt{ \log(\# \om_j) }.
\end{align*}

\end{cor}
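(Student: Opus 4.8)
The plan is to apply the almost-orthogonality principle of Theorem \ref{thm:ortho} with the given cover $\{O_j\}$, for which $\delta_j = \delta$ for every $j$, and then feed in the known $L^2$-bounds for lacunary direction sets. The first step is to bound the quantities $E_l = E_l(c,\{\delta\})$. Since all the caps have the common diameter $\delta$ and are finitely overlapping, $E_l$ counts, up to a bounded factor, the number of caps $O_j$ whose centers lie within distance $\sim 2^l\delta$ of a fixed hyperplane $w^\perp$; this is a $2^l\delta$-neighborhood of an $(n-2)$-dimensional equator inside $\Sp^{n-1}$, which has measure $\sim (2^l\delta)^{n-1}/\delta = (2^l)^{n-1}\delta^{n-2}$ as long as $2^l\delta \lesssim 1$, and is all of $\Sp^{n-1}$ once $2^l\delta \gtrsim 1$. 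Dividing by the area $\sim \delta^{n-1}$ of a single cap gives
\[ E_l \les \min\{ 2^{l(n-1)},\ \delta^{-(n-1)} \} \cdot \delta^{n-2} \cdot (\text{something}) \quad\Longrightarrow\quad \sqrt{E_l} \les \delta^{-\frac{n-2}{2}} \min\{ 2^{\frac{l(n-1)}{2}}\delta^{\frac{n-1}{2}},\ 1\}. \]
Hence $\sum_{l\ge 0} 2^{-l}\sqrt{E_l}$ is a geometric-type sum: for $2^l \lesssim \delta^{-1}$ the summand is $\les 2^{-l} \cdot 2^{l(n-1)/2}\delta^{(n-1)/2}\cdot\delta^{-(n-2)/2}$, which for $n\ge 3$ grows geometrically and is dominated by its value at $2^l\sim\delta^{-1}$, namely $\les \delta^{-(n-2)/2}$; and for $2^l \gtrsim \delta^{-1}$ the summand is $\les 2^{-l}\delta^{-(n-2)/2}$, which sums to $\les \delta\cdot\delta^{-(n-2)/2} \les \delta^{-(n-2)/2}$. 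Therefore $\sum_{l\ge 0}2^{-l}\sqrt{E_l} \les \delta^{-(n-2)/2}$, and likewise $\sqrt{E_0} \les \delta^{-(n-2)/2}$ (this is just the $l=0$ term, and $\delta^{-(n-2)/2}\ge 1$).

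The second step is to control the terms $\norm{\M_{\cO}}_{L^2}$ and $\norm{T_{\cO}}_{L^2}$ coming from the collection $\cO = \{v_j\}$ of one representative per cap. Since $\{O_j\}$ is a finitely overlapping cover of $\Sp^{n-1}$ by caps of diameter $\delta$, the set $\cO$ is comparable to an equidistributed (maximal $\sim\delta$-separated) set with $\#\cO \sim \delta^{-(n-1)}$. Applying Theorem \ref{thm:sin} gives $\norm{\M_{\cO}}_{L^2}\les \norm{M_{\cO}}_{L^2}\les (\#\cO)^{(n-2)/(2(n-1))} \sim \delta^{-(n-2)/2}$, and similarly $\norm{T_{\cO}}_{L^2}\les \delta^{-(n-2)/2}$. (One should double-check that a finitely overlapping cover by caps of diameter $\delta$ gives a set of representatives that is, after discarding $O(1)$-overlapping copies, contained in an equidistributed set and of cardinality $\les \delta^{-(n-1)}$; this is routine.)

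The final step is to bound the per-piece terms $\norm{\M_{\om_j}}_{L^2}$ and $\norm{T_{\om_j}}_{L^2}$. Here $\om_j\subset O_j$ is lacunary in the sense of \cite{Parcet-Rogers}, so by the $L^2$-theory for lacunary direction sets \cite{Parcet-Rogers,ADP} one has $\norm{M_{\om_j}}_{L^2}\les 1$ and $\norm{T_{\om_j}}_{L^2}\les \sqrt{\log(\#\om_j)}$, uniformly in $j$ (lacunarity bounds are dilation- and rotation-invariant, so containment in a small cap does not change them). Since $M_{\om_j}f \les \M_{\om_j}|f|$, the same bound holds for $\norm{\M_{\om_j}}_{L^2}$. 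Substituting these three ingredients into \eqref{eqn:orthoM} yields
\[ \norm{\M_\om}_{L^2} \les \delta^{-\frac{n-2}{2}} + 1\cdot \delta^{-\frac{n-2}{2}} \les \delta^{-\frac{n-2}{2}}, \]
and since $M_\om f\les \M_\om|f|$ this gives the first claimed bound. Substituting into \eqref{eqn:orthoT} gives
\[ \norm{T_\om}_{L^2} \les \delta^{-\frac{n-2}{2}} + \max_j\bigl(\sqrt{\log\#\om_j} + \sqrt{\log\#\om_j}\bigr)\,\delta^{-\frac{n-2}{2}} \les \delta^{-\frac{n-2}{2}}\max_j\sqrt{\log(\#\om_j)}, \]
which is the second claimed bound (absorbing the $\sqrt{\log\#\om_j}$ inside Theorem \ref{thm:ortho}'s parenthesis into the maximum).

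The main obstacle I expect is the estimate of $\sum_{l\ge0}2^{-l}\sqrt{E_l}$: one must verify that the geometric growth of $\sqrt{E_l}$ in the range $2^l\lesssim\delta^{-1}$ is exactly balanced by the factor $2^{-l}$ so that the sum does not lose a power of $\delta$ beyond $\delta^{-(n-2)/2}$, and that for $n=3$ the borderline case $2^{-l}\cdot 2^{l}=1$ only produces a $\log$ — but here, crucially, the range $2^l\lesssim\delta^{-1}$ is finite, so in fact the $n=3$ sum is $\les \log(1/\delta)\cdot\delta^{-1/2}$... one should therefore check whether for $n=3$ an extra $\log(1/\delta)$ appears, or whether a more careful summation (e.g. comparing $E_l$ to $\min\{2^{2l}\delta,1\}$ and noting $\sum 2^{-l}\min\{2^l\sqrt\delta,1\} \sim \sqrt\delta\cdot\#\{l: 2^l\le\delta^{-1/2}\}+ \dots$) still closes. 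If the clean $\delta^{-(n-2)/2}$ bound genuinely holds for $n=3$ as stated, it is because $\sqrt{E_l}\les \min\{2^{l}\sqrt\delta,1\}\cdot\delta^{-1/2}$ makes $2^{-l}\sqrt{E_l}\les \min\{\sqrt\delta, 2^{-l}\}\delta^{-1/2}$, summing to $\les \delta^{-1/2}$ without a log; confirming this bookkeeping is the one step that warrants care.
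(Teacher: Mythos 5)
Your overall strategy is exactly the paper's: apply Theorem \ref{thm:ortho} with the given $\delta$-caps, control $\norm{\M_\cO}_{L^2}$ and $\norm{T_\cO}_{L^2}$ by Theorem \ref{thm:sin} (after noting $\cO$ may be taken $\delta$-separated with $\#\cO\les\delta^{-(n-1)}$), insert the lacunary bounds of \cite{Parcet-Rogers,ADP} for the pieces $\om_j$, and estimate $E_0$ and $\sum_l 2^{-l}\sqrt{E_l}$. The one step where your write-up goes wrong is the geometric count of $E_l$. The set $\{v\in\Sp^{n-1}:\dist(v,w^\perp)\leq 2^l\delta\}$ is a width-$(2^l\delta)$ neighborhood of the great subsphere $\Sp^{n-1}\cap w^\perp$, which has codimension one in $\Sp^{n-1}$; its surface measure is therefore $\sim 2^l\delta$ (width times the $(n-2)$-measure $\sim 1$ of the equator), not $(2^l\delta)^{n-1}/\delta$. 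Dividing by the cap area $\sim\delta^{n-1}$ gives the correct bound $E_l\les\min\{2^{l}\delta^{-(n-2)},\ \delta^{-(n-1)}\}$, which is what the paper records as \eqref{eqn:e0} and the estimate behind \eqref{eqn:el} (with $\eta=1$). Your formula, besides containing an unresolved ``(something)'' factor, underestimates $E_l$ near $2^l\sim\delta^{-1}$ (where the true count is all $\sim\delta^{-(n-1)}$ caps) and gives $E_0\les\delta$, which is absurd; an upper bound derived from an underestimate of $E_l$ is not a proof.

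With the correct count the summation is clean and your closing worry about $n=3$ evaporates: $2^{-l}\sqrt{E_l}\les 2^{-l}\cdot 2^{l/2}\delta^{-(n-2)/2}=2^{-l/2}\delta^{-(n-2)/2}$, and $\sum_l 2^{-l/2}<\infty$ for every $n$, so $\sum_{l\geq 0}2^{-l}\sqrt{E_l}\les\delta^{-(n-2)/2}$ with no logarithmic loss and no borderline exponent. Two smaller remarks: (i) to feed the lacunary bounds into \eqref{eqn:orthoM} you need $\norm{\M_{\om_j}}_{L^2}\les\norm{M_{\om_j}}_{L^2}$, i.e.\ the domination $\M_v f\les M_v|f|$ (which holds since $\phi$ has an integrable decreasing majorant); you quoted the inequality in the opposite direction, $M_\om f\les\M_\om|f|$, which is not the one needed here. (ii) The substitution into \eqref{eqn:orthoT} should also account for the constant $C_0=\norm{m}_{L^\infty}$ absorbed into the $\sqrt{\log\#\om_j}$ term, but this is harmless. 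Once the $E_l$ computation is repaired as above, your argument coincides with the paper's proof.
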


The almost orthogonality principle \eqref{eqn:orthoM} for $\M_\om$ can be regarded as a generalization of a result due to Alfonseca \cite{Alf} for the case $n=2$. The proof of \eqref{eqn:orthoM} is based on \cite{Alf} and also the work of Duoandikoetxea and Moyua \cite{DM}. For the proof of \eqref{eqn:orthoT}, we start by writing $T_v = T_{v_j} + [T_v- T_{v_j}]$ for $v \in \om_j$. When $m(\xi) = -i\sgn(\xi)$, the multiplier for the difference $T_v - T_{v_j}$ is supported in a conic region determined by $v$ and $v_j$, and this fact played a crucial role in the papers \cite{Kim, KP}. However, this localization property fails in general. To handle the term $T_v - T_{v_j}$, we break the frequency space into conic regions according to the size of $|m(v\cdot \xi) - m(v_j\cdot \xi)|$. In addition, we use a square-function reduction which is responsible for the term $\sqrt{\log \# \om_j}$ in \eqref{eqn:orthoT}; see Proposition \ref{prop:CWW}. 

A weakness of Theorem \ref{thm:ortho} is that the numbers $E_l$ depend on the diameters of $\{O_j\}$, which originates from our choice of the decomposition for the frequency space mentioned earlier. Theorem \ref{thm:ortho} seems particularly weak when $\om$ is contained in a subvariety of $\Sp^{n-1}$ and the sets $\{O_j \}$ are subsets of the subvariety. For such a lower-dimensional situation, a more refined decomposition of the frequency space might be useful, but we do not explore it in this paper.

\section{Proof of Theorem \ref{thm:sin} and Corollary \ref{cor:mixed}} \label{sec:proofmain}
In this section, we prove Theorem \ref{thm:sin} and Corollary \ref{cor:mixed} assuming Theorem \ref{thm:ortho}. We prove Theorem \ref{thm:ortho} in the following sections.

\subsection{Proof of Theorem \ref{thm:sin}}
Assume that $\om$ is a maximal $\delta$-separated set in $\Sp^{n-1}$ for some $0<\delta<1$. Since $\# \om \sim \delta^{-(n-1)}$, it suffices to show that
\begin{equation}\label{eqn:main1}
\norm{\M_{\om}}_{L^2} \les \delta^{-(n-2)/2} \;\; \text{ and } \;\; \norm{T_{\om}}_{L^2} \les \delta^{-(n-2)/2}.
\end{equation}
The proof of \eqref{eqn:main1} is essentially the same for $\M_\om$ and $T_\om$. We will prove the statement for $\M_\om$ and mention minor modifications for $T_\om$. 

For the purpose of an induction argument, we consider a slightly more general statement. For $0<\delta\leq \eta\leq 1$, let $\Lambda_{\delta,\eta}$ be the collection of all $\delta$-separated subsets $\om$ of $\Sp^{n-1}$ such that $\operatorname{diameter} (\om) = \max_{v,v'\in \om} |v-v'| \leq \eta$. For $\om \in \Lambda_{\delta,\eta}$, note that $\# \om \les (\eta/\delta)^{n-1}$. Define
\[ C(\delta,\eta) = \sup_{\om \in \Lambda_{\delta,\eta}} \norm{\M_{\om}}_{L^2}. \]
We claim that there is an absolute constant $A$ such that
\begin{equation}\label{eqn:claim}
C(\delta,\eta) \leq A (\eta/\delta)^{(n-2)/2},
\end{equation}
so that \eqref{eqn:main1} is a special case of \eqref{eqn:claim} with $\eta = 1$. 

We prove the claim \eqref{eqn:claim} by induction on $\eta/\delta$. The base of the induction is the case $\eta/\delta \sim 1$, which holds trivially for sufficiently large $A$.

Now suppose that $\om \in \Lambda_{\delta,\eta}$ for some $0 < \delta \ll \eta \leq 1$. Let $C_0$ be the implicit constant in Theorem \ref{thm:ortho}. Recall that $n\geq 3$. We choose a constant $C \geq 2$ so that $C_0 C^{-(n-2)/2} \leq 1/2$. We fix a maximal $C \delta$-separated set of points $\cO=\{v_j\} $ from $\om$. Since $\cO \in \Lambda_{C\delta, \eta}$, by induction, we have 
\begin{equation}\label{eqn:co}
 C_0 \norm{M_{\cO}}_{L^2}  \leq  C_0 A(\eta/C\delta)^{(n-2)/2} \leq  A (\eta/\delta)^{(n-2)/2} /2.
\end{equation}

We apply Theorem \ref{thm:ortho} with caps $\{ O_j \}$, where $O_j$ is the intersection of $\Sp^{n-1}$ and the ball of radius $C\delta$ centered at $v_j$. We claim that if we choose $A$ sufficiently large, then 
\begin{equation}\label{eqn:corred}
 C_0  \sup_j \norm{ \M_{\om_j} }_{L^2} \sqrt{E_0} \leq A  (\eta/\delta)^{(n-2)/2}/2.
\end{equation}
Given \eqref{eqn:co} and \eqref{eqn:corred}, Theorem \ref{thm:ortho} yields \eqref{eqn:claim}, closing the induction. To see \eqref{eqn:corred}, first note that by the trivial estimate, $\norm{ \M_{\om_j} }_{L^2} \les \# \om_j \les C^{n-1}$. Next, we estimate $E_0$. Since $\{ v_j\} \subset \Sp^{n-1}$ are $C\delta$-separated points in a ball of radius $\eta$, for any plane $w^\perp$, there are at most $\les (\eta/C\delta)^{n-2}$ many $v_j$ such that $\dist(v_j, w^\perp) \leq 2 C\delta$. Therefore, 
\begin{equation}\label{eqn:e0}
E_0 \les (\eta/C\delta)^{n-2},
\end{equation}
 Combining the estimates for $E_0$ and $\norm{ \M_{\om_j} }_{L^2}$, we see that \eqref{eqn:corred} holds provided that $A$ is sufficiently large. 

The proof for $T_\om$ is similar except for the computation in \eqref{eqn:corred}; the use of \eqref{eqn:orthoT} leads us to consider $E_l$ for $l\geq 1$. Arguing as in the estimation of $E_0$, we compute that $E_l \les 2^l (\eta/C\delta)^{n-2}$. Therefore, 
\begin{equation}\label{eqn:el}
\sum_{l \geq 0} 2^{-l} \sqrt{E_l} \les (\eta/C\delta)^{(n-2)/2}, 
\end{equation}
which gives a version of \eqref{eqn:corred} for $T_\om$. This finishes the proof.

\subsection{Proof of Corollary \ref{cor:mixed}}
We fix a point $v_j$ from $O_j$. Without loss of generality, we may assume that the points in $\cO:=\{v_j\}$ are $\delta$-separated. Since $\# \cO \les \delta^{-(n-1)},$
 Theorem \ref{thm:sin} implies that
 \begin{align*}
\norm{M_{\cO} }_{L^2(\R^n)} \les  \delta^{-\frac{n-2}{2}}  \quad \text{ and } \quad
 \norm{T_{\cO} }_{L^2(\R^n)} \les \delta^{-\frac{n-2}{2}}.
\end{align*}
For lacunary $\om_j$, the papers \cite{Parcet-Rogers, ADP} give
 \begin{align*}
\norm{M_{\om_j} }_{L^2(\R^n)} \les  1  \quad \text{ and } \quad
 \norm{T_{\om_j} }_{L^2(\R^n)} \les \sqrt{ \log(\# \om_j )}.
\end{align*}
Corollary \ref{cor:mixed} follows from Theorem \ref{thm:ortho}, the above estimates, together with \eqref{eqn:e0} and \eqref{eqn:el} with $\eta=1$.

\section{Almost orthogonality principle for $\M_\om$: Proof of \eqref{eqn:orthoM}} \label{sec:M}
We will prove \eqref{eqn:orthoM} with $c=1$; the case $0<c<1$ requires only obvious modifications. 

We consider a Nikodym-type maximal function 
\[ \Ny_{v, \delta} f(x) = \sup_{R \in B(v, \delta)} \frac{1}{|R|} \int_{R}  |f(x-y)| dy, \;\; \Ny_{\om, \delta} f(x) = \sup_{v \in \om} \Ny_{v,\delta} f(x), \]
where $0<\delta<1$ and $B(v, \delta)$ denotes the collection of all rectangles of dimensions $h(1 \times \delta \times \cdots \times \delta)$ for all $h>0$ pointing in the direction $v$, centered at the origin. 

As is well-known, $\M_{v}$ controls $\Ny_{v,\delta}$ up to a composition with the Hardy-Littlewood maximal function. Without loss of generality, we may assume that $|v-e_n| < 1/100$ for every $v\in \om$. Then we have 
\begin{equation} \label{eqn:nykMv}
\Ny_{v,\delta} f(x) \les  \M_{v} M_{HL'} f(x)
\end{equation}
uniformly in $\delta$, where $M_{HL'}$ denotes the Hardy-Littlewood maximal function acting only on the first $(n-1)$-variables. There is a converse statement for functions with compact Fourier supports. Let $\vp$ be a smooth radial function supported on $|\xi|\leq 2$ such that $\vp(\xi) = 1$ for $|\xi| \leq 1$. We denote by $\vp(D)$ the multiplier transform $\widehat{\vp(D) f }(\xi ) = \vp(\xi) \wh{f}(\xi)$. 

\begin{lem}[cf. {\cite[Lemma 3]{DM}}] \label{lem:nyk}
\[ \sup_{h>0} |A_{v,h} \vp(h\delta D) f(x)| \les \Ny_{v,\delta} f(x).\]
\end{lem}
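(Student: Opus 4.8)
The plan is to unwind the definitions and exhibit $A_{v,h}\varphi(h\delta D)f(x)$ as an average of $|f|$ against a nonnegative kernel supported (up to rapidly decaying tails) on a rectangle of dimensions $\sim h(1\times\delta\times\cdots\times\delta)$ pointing in direction $v$, after which the pointwise bound by $\cN_{v,\delta}f(x)$ is immediate. Concretely, write $A_{v,h}\varphi(h\delta D)f = f * K_{v,h,\delta}$, where $K_{v,h,\delta}$ is the convolution of the measure $\tfrac1h\phi(t/h)\,dt$ along the line $\R v$ with the function $(h\delta)^{-n}\check\varphi((h\delta)^{-1}\cdot)\cdot(\text{the }h^{-1}\text{-scaled line density})$; more precisely $\widehat{K_{v,h,\delta}}(\xi)=\wh\phi(h\,v\cdot\xi)\,\varphi(h\delta\xi)$, so by scaling ($\xi\mapsto\xi/h$) and rotating $v$ to $e_n$, it suffices to treat $h=1$, $v=e_n$, i.e. to show $|g*\Phi_\delta(x)|\lesssim \cN_{e_n,\delta}g(x)$ where $\wh{\Phi_\delta}(\xi)=\wh\phi(\xi_n)\varphi(\delta\xi)$.

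The key point is a kernel estimate: $\Phi_\delta$ is, up to an $L^1$-normalized error that decays faster than any polynomial, a nonnegative bump adapted to the rectangle $\{|x_n|\lesssim 1,\ |x'|\lesssim\delta\}$. One sees this by factoring the Fourier transform. Since $\wh\phi$ is supported in $[-1,1]$ and $\phi\gtrsim\ind_{[-1,1]}$, the inverse Fourier transform in the $x_n$-variable of $\wh\phi(\xi_n)$ is exactly $\phi(x_n)$, which is a fixed Schwartz function, while the inverse Fourier transform in $x'$ of $\varphi(\delta\xi')$ is $\delta^{-(n-1)}\check\varphi'(x'/\delta)$, a Schwartz bump at scale $\delta$; there is also a mild coupling from the full radial factor $\varphi(\delta\xi)$ versus $\varphi(\delta\xi')$, but since $|\xi_n|\le 1$ on the support of $\wh\phi(\xi_n)$ and $\delta<1$, this coupling is a smooth perturbation which only improves the decay. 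Thus $|\Phi_\delta(x)|\lesssim \delta^{-(n-1)}(1+|x_n|)^{-N}(1+|x'|/\delta)^{-N}$ for every $N$. Decomposing dyadically in $(|x_n|,|x'|/\delta)$ into rectangles $R_{j,k}$ of dimensions $\sim 2^j(1\times\delta\times\cdots\times\delta)$ (the standard argument for dominating a Schwartz tail by the maximal function over a lacunary family of dilates), each piece contributes $\lesssim 2^{-jN}2^{-kN}$ times an $L^1$-normalized average of $|g|$ over such a rectangle, each of which is $\le\cN_{e_n,\delta}g(x)$. Summing the geometric series gives the claim.

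The main obstacle — really the only nonroutine point — is the coupling in the radial multiplier $\varphi(\delta\xi)$ between the $\xi_n$ and $\xi'$ variables, which prevents a clean tensor factorization of the kernel. I would handle this by writing $\varphi(\delta\xi)=\varphi(\delta\xi')\chi(\delta\xi)+[\varphi(\delta\xi)-\varphi(\delta\xi')\chi(\delta\xi)]$ for a suitable $\chi$ equal to $1$ on the relevant region, or more simply by noting that on the support of $\wh\phi(\xi_n)$ one has $|\xi_n|\le 1$, so $\varphi(\delta\xi)$ is a symbol which is Schwartz in $\xi'$ uniformly in the bounded parameter $\xi_n$ (with $\delta$-dependence only through the $\delta$-dilation in $\xi'$), and hence its partial inverse Fourier transform in $x'$ is bounded by $C_N\delta^{-(n-1)}(1+|x'|/\delta)^{-N}$ with constants independent of $\xi_n$; integrating then in $\xi_n$ against $\wh\phi(\xi_n)$ produces the claimed Schwartz-type kernel bound. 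This is exactly the content invoked from \cite[Lemma 3]{DM}, so one may alternatively cite it directly; for completeness I would include the short factorization argument above.
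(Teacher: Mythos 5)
Your proposal is correct and is precisely the ``standard computation'' that the paper declines to write out: the multiplier $\wh{\phi}(hv\cdot\xi)\vp(h\delta\xi)$ is a bump adapted to a dual rectangle, so the kernel has rapid decay off a rectangle of dimensions $h(1\times\delta\times\cdots\times\delta)$ in direction $v$, and the dyadic tail sum is dominated by $\Ny_{v,\delta}f(x)$ (the only unstated detail being that each dyadic piece $R_{j,k}$ must be enlarged to an admissible rectangle of eccentricity exactly $\delta$, costing a polynomial factor in $2^j,2^k$ that the rapid decay absorbs). Your treatment of the coupling between $\xi_n$ and $\xi'$ in $\vp(\delta\xi)$ is also fine, and in fact direct integration by parts on the product multiplier already gives the claimed kernel bound without any factorization.
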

\begin{proof}
Note that the Fourier multiplier for $A_{v,h} \vp(h\delta D)$ is $\wh{\phi}(h v\cdot \xi) \vp(h\delta \xi)$, which is a bump function on a rectangle of dimensions $h^{-1} \times h^{-1} \delta^{-1} \times \cdots \times h^{-1} \delta^{-1}$ with the short direction in $v$. Thus, $A_{v,h} \vp(h\delta D)$ is essentially an averaging operator over dual rectangles of dimensions $h(1 \times \delta \times \cdots \times \delta)$ in $B(v,\delta)$ and is controlled by $\Ny_{v,\delta}$. We omit the details of this standard computation.
\end{proof}

Next, we consider the remaining part: $A_{v,h} (I-\vp(h\delta D))$. The Fourier multiplier of the remaining part is supported in \[ \{ \xi: |h v\cdot \xi| \leq 1, \; |h\delta \xi| \geq 1 \} \subset  \{ \xi: |v \cdot \xi| \leq \delta  |\xi| \} . \]
Let $R_{W}$ be the Fourier restriction operator to the set $W$; \[ \wh{R_{W} f}(\xi) = \ind_W(\xi) \wh{f}(\xi).\] By the above observation, we have 
\begin{equation}\label{eqn:supp}
A_{v,h} (I-\vp(h\delta D)) = A_{v,h} (I-\vp(h\delta D)) R_W 
\end{equation}
for any $W$ containing 
\begin{equation}\label{eqn:Cvd}
C_{v,\delta} := \{ \xi \in \R^n: |v \cdot \xi| \leq \delta  |\xi| \}. 
\end{equation}

\begin{lem}[cf. {\cite[Lemma 3]{DM}}] \label{lem:sqr}  If $C_{v,\delta} \subset W$, then for a.e. $x$, 
\[ \sup_{h>0} |A_{v,h} (I-\vp(h\delta D)) f(x)| \les \M_v M_{HL} R_W f (x). \]
\end{lem}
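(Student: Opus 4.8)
The plan is to decompose the multiplier $1 - \vp(h\delta\xi)$ into a Littlewood--Paley-type sum of pieces localized to dyadic distances from the hyperplane $v^\perp$, and to estimate each piece by the Nikodym maximal function, summing with a geometric gain. Concretely, write $1 - \vp(h\delta\xi) = \sum_{k\geq 0}\psi_k(h\delta\xi)$ where $\psi_0 = 1 - \vp$ away from the origin is supported in $|h\delta\xi|\sim 1$ and, more importantly, each $\psi_k$ should carry an additional cutoff adapted to the angular variable: since on the support of the multiplier of $A_{v,h}(I - \vp(h\delta D))$ we already have $|hv\cdot\xi|\leq 1$ while $|h\delta\xi|\geq 1$, i.e. $|v\cdot\xi|\leq \delta|\xi|$, the point is to split the cone $C_{v,\delta}$ itself. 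So I would instead write $A_{v,h}(I - \vp(h\delta D)) = \sum_{k\geq 0} A_{v,h}(\vp(2^{-k-1}h\delta D) - \vp(2^{-k}h\delta D))$ (with the $k=0$ term being $A_{v,h}(\vp(h\delta D/2) - \vp(h\delta D))$ suitably interpreted so the telescoping sum reconstructs $I - \vp(h\delta D)$ on the relevant frequency range), where the $k$-th term has frequency support in $|h\delta\xi|\sim 2^k$ together with $|hv\cdot\xi|\leq 1$, hence its multiplier lives on a slab of the form $\{|v\cdot\xi|\leq 2^{-k}\delta|\xi|,\ |\xi|\sim (h\delta)^{-1}2^k\}$, a set contained in $C_{v, 2^{-k}\delta} \subset C_{v,\delta}\subset W$.

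The next step is to handle a single dyadic piece. For fixed $k$, the operator $A_{v,h}(\vp(2^{-k-1}h\delta D) - \vp(2^{-k}h\delta D))$ has Fourier multiplier which is a bump function on a plate of dimensions roughly $h^{-1}2^{k}\delta^{-1}$ in each of the directions perpendicular to $v$ inside $v^\perp$, but only $h^{-1}$ thick in the direction $v$ (because of the $|hv\cdot\xi|\leq 1$ constraint inherited from $\wh\phi(hv\cdot\xi)$). By the standard argument already invoked in Lemma~\ref{lem:nyk}, such an operator is essentially an average over a dual plate of dimensions $h(1\times 2^{-k}\delta\times\cdots\times 2^{-k}\delta)$ pointing in direction $v$ — which belongs to $B(v,\delta)$ (indeed to $B(v,2^{-k}\delta)\subset B(v,\delta)$) — so after composing with the Hardy--Littlewood maximal function $M_{HL}$ to absorb the rapidly decaying (rather than compactly supported) tails of the bump, it is controlled pointwise by $\Ny_{v,\delta} M_{HL} f$. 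Crucially, because each such piece also requires the Fourier restriction $R_W$ by \eqref{eqn:supp}--\eqref{eqn:Cvd}, I get the bound $\sup_{h>0}|A_{v,h}(\vp(2^{-k-1}h\delta D) - \vp(2^{-k}h\delta D)) f(x)|\les \Ny_{v,\delta} M_{HL} R_W f(x)$ for each $k$, but with a constant that must be summable in $k$.

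The summability is where the real work is: a crude triangle inequality over $k$ loses a factor of $\log(1/\delta)$ or worse, so I would instead extract an extra factor of $2^{-k\epsilon}$ (any positive $\epsilon$, or even a $2^{-k}$) from each dyadic piece. This gain should come from the off-diagonal decay of the plate-like kernels: the kernel of $A_{v,h}$ at scale $h$ composed with a Littlewood--Paley projection at the much finer scale $(h\delta/2^k)^{-1}$ has a large separation of scales, and a routine integration-by-parts / nonstationary-phase estimate on the multiplier $\wh\phi(hv\cdot\xi)(\vp(2^{-k-1}h\delta\xi) - \vp(2^{-k}h\delta\xi))$ — which is smooth with derivative bounds reflecting its thin plate support — yields a kernel bound with an acceptable power of $2^{-k}$ once one pays attention to the fact that $\wh\phi$ is supported in $[-1,1]$ and is flat there. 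Alternatively, and perhaps more robustly, I can run a vector-valued / $g$-function argument: bound $\big(\sum_k \sup_h |A_{v,h}(\text{piece}_k)f|^2\big)^{1/2}$ by $\Ny_{v,\delta}M_{HL}R_W f$ using that the projections $\vp(2^{-k-1}h\delta D) - \vp(2^{-k}h\delta D)$ are almost orthogonal uniformly in $h$, and then the $\ell^1$-to-$\ell^2$ passage is free. In either case the main obstacle is exactly this summation in $k$: making the decomposition so that each term is genuinely controlled by $\Ny_{v,\delta}$ (not a $k$-dependent Nikodym operator, which is why the containment $C_{v,2^{-k}\delta}\subset C_{v,\delta}$ and $B(v,2^{-k}\delta)\subset B(v,\delta)$ matters) while retaining enough decay to sum. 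Once that is done, combining the $k$-sum with Lemma~\ref{lem:nyk} for the low-frequency part $A_{v,h}\vp(h\delta D)$ and taking the supremum over $h$ and then over $v\in\om$ gives the claimed pointwise bound $\sup_{h>0}|A_{v,h}(I - \vp(h\delta D))f(x)|\les \M_v M_{HL}R_W f(x)$ — here I replace $\Ny_{v,\delta}$ by $\M_v M_{HL}$ at the very end via \eqref{eqn:nykMv}, which is legitimate since the estimate \eqref{eqn:nykMv} holds uniformly in $\delta$.
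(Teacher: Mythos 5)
There is a genuine gap, and it sits exactly where you locate "the real work": the summation in $k$. Your dyadic pieces $A_{v,h}(\vp(2^{-k-1}h\delta D)-\vp(2^{-k}h\delta D))$ each have multiplier of size $O(1)$ and kernel of $L^1$-norm $O(1)$, uniformly in $k$, and $k$ ranges over all of $\N$; no decay in $k$ is available. The mechanisms you propose do not produce any: $\wh\phi(hv\cdot\xi)$ is merely a smooth compactly supported function, so it has no smallness on the region $|h\delta\xi|\sim 2^k$ (integration by parts gives spatial decay of each kernel at its own scale, not a gain between scales); and the containment $B(v,2^{-k}\delta)\subset B(v,\delta)$ goes the wrong way for maximal averages, since comparing the average over a plate of dimensions $h(1\times 2^{-k}\delta\times\cdots)$ to one over $h(1\times\delta\times\cdots)$ costs a factor $2^{k(n-1)}$, so each piece is controlled by $\Ny_{v,2^{-k}\delta}$ but not uniformly by $\Ny_{v,\delta}$. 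Finally, the vector-valued alternative does not help: bounding $\bigl(\sum_k\sup_h|\cdot|^2\bigr)^{1/2}$ does not bound $\sum_k\sup_h|\cdot|$, i.e.\ the passage you need is from $\ell^2$ control to $\ell^1$ control, which is not free but false in general.

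The lemma is in fact a one-line observation, which is why its conclusion features the crude majorant $\M_v M_{HL}$ rather than a Nikodym operator. By \eqref{eqn:supp} one may insert $R_W$ for free. Then one uses that $|(I-\vp(h\delta D))g(x)|\leq |g(x)|+|\vp(h\delta D)g(x)|\les M_{HL}g(x)$ for a.e.\ $x$, uniformly in the dilation parameter $h\delta$, because the kernel of $\vp(h\delta D)$ is an $L^1$-normalized rapidly decaying bump. Applying this with $g=R_Wf$ inside the average $A_{v,h}$ and taking the supremum over $h$ gives $\sup_h|A_{v,h}(I-\vp(h\delta D))R_Wf|\les \M_v M_{HL}R_Wf$ directly. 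No frequency decomposition of $I-\vp(h\delta D)$ is needed; the fine plate geometry is only relevant for the complementary low-frequency piece treated in Lemma \ref{lem:nyk}.
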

\begin{proof}
This follows from \eqref{eqn:supp} and the fact that $ |(I-\vp(h\delta D))  f(x)| \les M_{HL} f(x)$ for a.e. $x$ for the Hardy-Littlewood maximal function $M_{HL}$.
\end{proof}

\begin{lem}[cf. {\cite[Lemma 3]{Alf}}] 
Let $v, v_j \in \Sp^{n-1}$ such that $|v -v_j| \leq \delta_j$ and $W_j=C_{v_j,2\delta_j}$. Then for almost every $x$,
\begin{equation}\label{eqn:pt}
\M_v f(x) \les \Ny_{v_j,\delta_j} f(x) + \M_v M_{HL} R_{W_j} f (x).
\end{equation} 
\end{lem}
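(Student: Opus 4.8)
\textbf{Proof proposal for \eqref{eqn:pt}.}
The plan is to split $A_{v,h}$ according to whether the frequency is inside or outside the cap-shaped multiplier support, exactly as set up in Lemmas \ref{lem:nyk} and \ref{lem:sqr}, but with the cut-off scale tied to $v_j$ and $\delta_j$ rather than to $v$. First I would write, for each $h>0$,
\[ A_{v,h} f = A_{v,h}\vp(h\delta_j D) f + A_{v,h}(I-\vp(h\delta_j D)) f, \]
and take the supremum in $h$ of the absolute values separately. The first term is handled by Lemma \ref{lem:nyk} \emph{with $v_j$ in place of $v$}: since $|v-v_j|\leq \delta_j$, the multiplier $\wh{\phi}(hv\cdot\xi)\vp(h\delta_j\xi)$ is still a bump adapted to a rectangle of dimensions $h^{-1}\times (h\delta_j)^{-1}\times\cdots\times(h\delta_j)^{-1}$ whose short axis points in a direction within $\delta_j$ of $v_j$; up to enlarging the rectangle by a bounded factor this is comparable to a rectangle in $B(v_j,\delta_j)$ (this is why the constant $2$ appears in $W_j=C_{v_j,2\delta_j}$), so its sup over $h$ is controlled by $\Ny_{v_j,\delta_j}f(x)$ for a.e.\ $x$. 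This gives the first term on the right of \eqref{eqn:pt}.

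For the second term, the multiplier of $A_{v,h}(I-\vp(h\delta_j D))$ is supported where $|hv\cdot\xi|\leq 1$ and $|h\delta_j\xi|\geq 1$, hence where $|v\cdot\xi|\leq \delta_j|\xi|$. Since $|v-v_j|\leq\delta_j$ and $|\xi|\le|\xi|$, one has $|v_j\cdot\xi|\leq |v\cdot\xi| + |v-v_j||\xi| \leq 2\delta_j|\xi|$, so the support is contained in $C_{v_j,2\delta_j}=W_j$. Therefore $A_{v,h}(I-\vp(h\delta_j D)) = A_{v,h}(I-\vp(h\delta_j D))R_{W_j}$, and Lemma \ref{lem:sqr} (again with the cut-off scale $\delta_j$, which is legitimate because $C_{v_j,2\delta_j}\subset W_j$) bounds $\sup_{h>0}|A_{v,h}(I-\vp(h\delta_j D))R_{W_j}f(x)|$ by $\M_v M_{HL} R_{W_j} f(x)$ for a.e.\ $x$. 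Adding the two pieces and using $\M_v f \le \sup_h|A_{v,h}\vp(h\delta_j D)f| + \sup_h|A_{v,h}(I-\vp(h\delta_j D))f|$ pointwise yields \eqref{eqn:pt}.

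The only genuinely substantive point — and the one I would write out with a little care — is the geometric claim that the rectangle dual to the support of $\wh{\phi}(hv\cdot\xi)\vp(h\delta_j\xi)$, whose short axis is $v$ rather than $v_j$, is still comparable to a member of $B(v_j,\delta_j)$; this is where the hypothesis $|v-v_j|\le\delta_j$ is used and where one must check that tilting the short axis by an angle $O(\delta_j)$ only dilates the rectangle by a bounded factor, so that the reduction to $\Ny_{v_j,\delta_j}$ (rather than $\Ny_{v,\delta_j}$) goes through. Everything else is a bookkeeping application of the two preceding lemmas with the replacement $\delta\rightsquigarrow\delta_j$, $v\rightsquigarrow v_j$ in the cut-off, and the elementary inequality $|v_j\cdot\xi|\le 2\delta_j|\xi|$ on the relevant cone.
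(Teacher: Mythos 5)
Your proof is correct and takes essentially the same route as the paper: decompose $A_{v,h}$ at the cut-off scale $\delta_j$, control the low-frequency piece by Lemma \ref{lem:nyk} together with the comparability of $\Ny_{v,\delta_j}$ and $\Ny_{v_j,\delta_j}$ when $|v-v_j|\le\delta_j$, and handle the high-frequency piece via the triangle-inequality containment $C_{v,\delta_j}\subset C_{v_j,2\delta_j}=W_j$ and Lemma \ref{lem:sqr}. (One small aside: the factor $2$ in $W_j$ comes from that frequency-space triangle inequality, as your second paragraph correctly shows, rather than from the spatial tilting of the rectangles.)
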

\begin{proof}
We first note that $\Ny_{v_j,\delta_j} f(x)$ is comparable to $\Ny_{v,\delta_j} f(x)$ as $|v-v_j| \leq \delta_j$. Moreover, by the triangle inequality, we observe that $C_{v,\delta_j} \subset W_j.$
Thus, the estimate is an application of Lemma \ref{lem:nyk} and Lemma \ref{lem:sqr} with $\delta=\delta_j$. 
\end{proof}

\begin{proof}[Proof of Theorem \ref{thm:ortho}]
We take sup over $v\in \om_j$ in \eqref{eqn:pt} and then take sup over $j$. This gives 
\[ 
\M_\om f(x) \les \sup_j \Ny_{v_j,\delta_j} f(x) + \sup_j \M_{\om_j} M_{HL} R_{W_j} f (x).
\]
Using the pointwise estimate \eqref{eqn:nykMv} and the embedding $l^2 \hookrightarrow l^\infty$, we get
\[ 
\M_\om f(x) \les \M_{\cO} M_{HL'}f(x) + \left(\sum_j |\M_{\om_j} M_{HL} R_{W_j} f (x)|^2 \right)^{1/2}.
\]
Using the $L^2$ boundedness of the Hardy-Littlewood maximal function and Plancherel, we see that
\[ 
\norm{\M_\om }_{L^2} \les \norm{\M_{\cO}}_{L^2} + \norm{ \sum_j \ind_{W_j } }_{L^\infty}^{1/2} \sup_j \norm{\M_{\om_j}}_{L^2}.
\]
Thus, it remains to examine $\sum_j \ind_{ W_j }(w)$. Since the function is homogeneous of degree 0, we may assume that $w\in \Sp^{n-1}$. Recall that $W_j=C_{v_j,2\delta_j}$, so $w\in W_j$ means that $|v_j\cdot w| \leq 2\delta_j$, or equivalently $\dist(v_j,w^\perp) \leq 2\delta_j$. Therefore, $\norm{ \sum_j \ind_{W_j } }_{L^\infty}$ is bounded by $E_0$ and this finishes the proof.
\end{proof}

\section{Almost orthogonality principle for $T_\om$: Proof of \eqref{eqn:orthoT}} \label{sec:T}
We consider conic regions $C_{v,\delta}$ defined in \eqref{eqn:Cvd}. For the given $0<c\leq 1$ and each $j$, we partition $\R^n$ into conic regions
\begin{align*}
W^{0}_j &= C_{v_j,(1+c)\delta_j} \\
W^{l}_j &= C_{v_j,(1+c)\delta_j 2^l} \setminus C_{v_j,(1+c)\delta_j 2^{l-1}}, \;\; \text{ for } \;  l\geq 1.
\end{align*}
 For each $j$, let $L_j$ be the smallest $l$ for which $(1+c) \delta_j 2^l \geq 1$. We have $C_{v_j,(1+c)\delta_j 2^{l}} = \R^n$ for $l > L_j$ by the choice of $L_j$, so $W_j^{l}$ is empty when $l>L_j$. 

Let $v \in \om_j$. We decompose
\begin{equation}\label{eqn:dec}
 T_v = T_{v_j} + [T_v - T_{v_j}]=T_{v_j} + [T_v-T_{v_j}] R_{W^0_j} + \sum_{l\geq 1} [T_v-T_{v_j}] R_{W^l_j}.
\end{equation}
This decomposition is motivated by the following pointwise bound
\begin{equation}\label{eqn:diff}
|m(v\cdot \xi) - m(v_j \cdot \xi)| \ind_{W^l_j}(\xi) \les 2^{-l}
\end{equation}
which holds uniformly for all $v\in \Sp^{n-1}$ with $|v-v_j|\les \delta_j$.
Indeed, we can write 
\begin{equation}\label{eqn:Wlj}
W^{l}_j = \{ \xi : (1+c)2^{l-1} \delta_j |\xi| < |v_j \cdot \xi| \leq (1+c)2^l \delta_j |\xi| \},
\end{equation}
and for $\xi \in W^l_j$, $|m(v\cdot \xi) - m(v_j \cdot \xi)|$ is bounded by 
$ C |(v-v_j) \cdot \xi|  / |v_j \cdot \xi| \les \delta_j |\xi| / ( 2^{l} \delta_j |\xi|) = 2^{-l}.$

From \eqref{eqn:dec}, we get the pointwise estimate
\[ T_\om f \leq T_\cO f + \sup_j T_{\om_j} R_{W^0_j} f + \sup_j |T_{v_j} R_{W^0_j} f| + \sum_{l \geq 1} \sup_j \sup_{v\in \om_j}| [T_v-T_{v_j}] R_{W^l_j} f|. \]
From this, as in the proof of \eqref{eqn:orthoM}, we may obtain
\begin{equation}\label{eqn:mid}
\begin{split}
 \norm{T_\om f}_2 &\leq \bigg( \norm{T_\cO}_2 + (\sup_j \norm{T_{\om_j} }_2+ C_0) \norm{ \sum_{j} \ind_{W^0_j} }_{\infty}^{1/2} \bigg) \norm{f}_2 \\
&+ \sum_{l \geq 1} \bigg( \sum_j \norm{ \sup_{v\in \om_j} |[T_v-T_{v_j}] R_{W^l_j}f|}_2^2 \bigg)^{1/2}, 
\end{split}
\end{equation}
where $C_0 = \norm{m}_{L^\infty}$.

Fix $j$ and $l \geq 1$. Let $L_k$ be a Littlewood-Paley frequency cut-off to the annulus $|\xi| \sim 2^k$. We handle the last display with the following square function reduction. 
\begin{prop}\label{prop:CWW} Let $\{\cT_v\}_{v\in V}$ be a finite collection of Fourier multiplier transformations bounded on $L^p(\R^n)$ for some $1<p<\infty$. Then 
\[ \norm{ \max_{v\in V} |\cT_v f| }_{p} \les (\log \# V)^{1/2} \norm{ \big(\sum_k \max_{v\in V} |\cT_v L_k f|^2\big)^{1/2} }_{p} +  \max_{v\in V} \norm{ \cT_v f }_{p}.\]
\end{prop}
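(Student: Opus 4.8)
The plan is to prove Proposition~\ref{prop:CWW} by combining a Littlewood--Paley decomposition with a Rademacher/randomization trick and a standard $\ell^\infty$-to-$\ell^2$ square-root-of-log loss. First I would write $\cT_v f = \cT_v L_{\text{low}} f + \sum_k \cT_v L_k f$, where the low-frequency piece is handled by the second term on the right-hand side once we absorb it into $\max_v \|\cT_v f\|_p$; more precisely, since the number of scales ``below'' any fixed point is infinite but the multipliers are all bounded, the main contribution is the dyadic sum, and any tail can be controlled by $\max_v \|\cT_v f\|_p$ directly using that the $\cT_v$ are bounded. The real content is to estimate $\|\max_{v\in V} |\sum_k \cT_v L_k f|\|_p$.

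The key step is the following: for each fixed $v$, introduce independent random signs $\{\varepsilon_k\}$ and use Khintchine's inequality in the form
\[
\Big\| \sum_k \cT_v L_k f \Big\|_p \les \Big( \E_\varepsilon \Big\| \sum_k \varepsilon_k \cT_v L_k f \Big\|_p^p \Big)^{1/p},
\]
but this alone does not interact well with the maximum over $v$. Instead I would exploit the fact that we only need a $(\log \# V)^{1/2}$ loss, so the natural route is: bound $\max_{v} |\cT_v f|$ pointwise (or in $L^p$) by first passing to an $\ell^\infty$ norm over $v$ of the dyadic sum and then using the elementary inequality $\|\max_{v\in V} g_v\|_p \les (\log \#V)^{1/2}\, (\text{something})$. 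Concretely, the cleanest argument is via a $\log$-weighted Cauchy--Schwarz in $k$: write $\cT_v L_k f = \cT_v L_k (\tilde L_k f)$ where $\tilde L_k$ is a fattened projection, and for any $N\ge 1$,
\[
\Big| \sum_k \cT_v L_k f \Big| \le \Big( \sum_k \max_{v} |\cT_v L_k f|^2 \Big)^{1/2} \Big( \sum_{k:\ \cdots} 1\Big)^{1/2},
\]
which by itself loses a factor of the number of active scales rather than its square root. To get only $\sqrt{\log \#V}$ I would instead use the Chang--Wilson--Wolff inequality (this is surely the ``CWW'' the proposition's name refers to), which says that for a function whose Littlewood--Paley square function is controlled, its $L^p$ and BMO-type norms are controlled; applied to $g_v := \sum_k \cT_v L_k f$ one obtains exponential-square integrability of $g_v$ relative to the square function, and then a union bound over the $\#V$ many $v$ costs exactly $(\log\#V)^{1/2}$ when converting $\max_v$ of the $g_v$'s back to an $L^p$ bound. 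This yields
\[
\Big\| \max_{v\in V}|g_v| \Big\|_p \les (\log\#V)^{1/2}\, \Big\| \big( \textstyle\sum_k \max_{v\in V}|\cT_v L_k f|^2 \big)^{1/2}\Big\|_p,
\]
which is the first term on the right-hand side, and combining with the low-frequency/tail term $\max_v\|\cT_v f\|_p$ finishes the proof.

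I expect the main obstacle to be making the Chang--Wilson--Wolff step uniform over the family $\{\cT_v\}$ with only a $\sqrt{\log\#V}$ loss: one must apply CWW not to each $g_v$ in isolation but in a way that the resulting exponential-integrability bound is governed by the \emph{common} square function $S f := \big(\sum_k \max_v |\cT_v L_k f|^2\big)^{1/2}$ rather than the individual $S_v f := \big(\sum_k |\cT_v L_k f|^2\big)^{1/2}$. This is fine because $S_v f \le S f$ pointwise, so the distributional (subgaussian-in-$Sf$) bound for each $g_v$ is dominated by the one coming from $Sf$, and then $\Pr(\max_v |g_v| > \lambda S f^*) \le \#V \cdot e^{-c\lambda^2}$ after a good-$\lambda$/stopping-time localization of $Sf$; choosing $\lambda \sim \sqrt{\log\#V}$ balances the union bound. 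A secondary technical point is dealing with the fact that $\sum_k \cT_v L_k$ need not reproduce $\cT_v$ exactly (the Littlewood--Paley projections do not sum to the identity on the low frequencies), but this discrepancy is a single fixed multiplier bounded on $L^p$, so its contribution to $\max_v |\cT_v f|$ is pointwise $\les \max_v |\cT_v f|$ after composing with a harmless maximal function, landing in the last term of the claimed bound. The routine parts — Littlewood--Paley boundedness on $L^p$, the vector-valued inequalities needed to move $\max_v$ inside the square function, and the precise form of CWW — I would cite rather than reprove.
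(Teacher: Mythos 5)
Your outline follows the same route the paper itself takes: the paper does not prove Proposition~\ref{prop:CWW} but derives it from the Chang--Wilson--Wolff inequality and \cite{GHS} (see also \cite[Proposition 3.1]{GuoRoosSeegerYung}), and your sketch is precisely that argument --- exponential-square (subgaussian) distributional control of each $g_v=\cT_v f$ relative to its Littlewood--Paley/martingale square function, pointwise domination $S_vf\le Sf:=\big(\sum_k\max_v|\cT_vL_kf|^2\big)^{1/2}$, and a union bound over the $\#V$ functions with the good-$\lambda$ parameter chosen $\sim(\log\#V)^{-1/2}$. That is the correct source of the $\sqrt{\log\#V}$, and deferring the martingale-vs.-smooth-square-function comparison to the cited references is reasonable, since the paper does the same.

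One step as written does not close: your account of where the final term $\max_{v}\|\cT_vf\|_p$ comes from. You treat it as a low-frequency discrepancy and argue that its contribution is pointwise $\les\max_v M(\cT_vf)$; but $\|\max_v M(\cT_vf)\|_p$ is only trivially $\les(\#V)^{1/p}\max_v\|\cT_vf\|_p$, which is not the claimed last term --- you cannot pass from the norm of a max to the max of the norms for free. (Moreover, if the $L_k$ sum to the identity over $k\in\Z$ there is no low-frequency discrepancy at all.) The actual mechanism lives inside the good-$\lambda$ iteration: the localized CWW bound gives an inequality of the shape
\[
\big|\{\max_v|g_v|>3\lambda,\ Sf\le\epsilon\lambda\}\big|\ \le\ C\,(\#V)\,e^{-c/\epsilon^2}\,\big|\{\max_v M g_v>\lambda\}\big|,
\]
and after integrating against $\lambda^{p-1}\,d\lambda$ the crude $(\#V)^{1/p}$ loss in $\|\max_v Mg_v\|_p\les(\#V)^{1/p}\max_v\|g_v\|_p$ is multiplied by $(\#V)e^{-c/\epsilon^2}$; taking $\epsilon=c'(\log\#V)^{-1/2}$ with $c'$ small makes this prefactor decay faster than any power of $\#V$, which is exactly what converts the trivial bound into $\max_v\|\cT_vf\|_p$. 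With that correction (and the union bound and the remainder term handled in the same stroke), your outline is the standard proof from the cited sources.
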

This square function reduction has been used earlier for the study of directional maximal functions; see e.g. \cite{Dem0,Dem, DD}. Proposition \ref{prop:CWW} follows from the Chang-Wilson-Wolff inequality \cite{CWW} (see also \cite[Proposition 3.1]{GuoRoosSeegerYung}) and ideas from the paper \cite{GHS}, especially, Section 4.

The pointwise estimate \eqref{eqn:diff} yields
\[ \norm{[T_v-T_{v_j}] R_{W^l_j}f}_2 \les 2^{-l} \norm{R_{W^l_j}f}_2. \]
This bound and Proposition \ref{prop:CWW}, with $\cT_v = [T_v-T_{v_j}] R_{W^l_j}$ and $V = \om_j$, give
\begin{equation}\label{eqn:CWW}
\begin{split}
  &\norm{ \sup_{v\in \om_j} |[T_v-T_{v_j}] R_{W^l_j}f|}_2 \\ 
  &\les  \sqrt{ \log \# \om_j } \bigg( \sum_{k\in \Z}  \norm{ \sup_{v\in \om_j} |[T_v-T_{v_j}] R_{W^l_j} L_k f|}_2^2 \bigg)^{1/2} 
   + 2^{-l} \norm{R_{W^l_j} f}_2.  
\end{split}
\end{equation}

We claim that there is a pointwise bound 
\begin{lem} \label{lem:pt} Let $v, v_j \in \Sp^{n-1}$. If $|v -v_j| \leq \delta_j$, then
\[  |[T_v-T_{v_j}] R_{W^l_j} L_k f(x)| \les 2^{-l} M^{str}_{v_j} R_{W^l_j} L_k f (x). \]
\end{lem}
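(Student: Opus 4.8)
The plan is to write the multiplier of $[T_v - T_{v_j}]R_{W^l_j}L_k$ explicitly and then realize the operator as an average (in a single parameter) of translates, weighted by an $L^1$-normalized kernel, whose total mass is controlled by $2^{-l}$; the pointwise bound by $M^{str}_{v_j}$ then falls out by the standard ``convolution with an $L^1$ bump'' principle. Concretely, the symbol is
\[
  \sigma(\xi) = \bigl(m(v\cdot\xi) - m(v_j\cdot\xi)\bigr)\,\ind_{W^l_j}(\xi)\,\psi(2^{-k}|\xi|),
\]
where $\psi$ is the Littlewood-Paley bump for $L_k$. By homogeneity of the cone $W^l_j$ and the annular localization, $\sigma$ lives on $|\xi|\sim 2^k$ and on the slab $|v_j\cdot\xi| \sim (1+c)2^l\delta_j|\xi|\sim 2^{l+k}\delta_j$; so $\sigma$ is supported in a box of dimensions roughly $2^{l+k}\delta_j$ in the $v_j$-direction and $2^k$ in each of the $(n-1)$ orthogonal directions. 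The key quantitative input is the derivative bound refining \eqref{eqn:diff}: differentiating $m(v\cdot\xi)-m(v_j\cdot\xi)$ and using $|m^{(\alpha)}(t)|\le C_\alpha|t|^{-\alpha}$ together with $|v-v_j|\le\delta_j$ and $|v_j\cdot\xi|\sim 2^{l+k}\delta_j$, one gets that $\sigma$ is an $L^\infty$-symbol of size $\les 2^{-l}$ which is, after the obvious anisotropic rescaling adapted to that box, a (uniformly bounded rescaling of a) Schwartz bump times $2^{-l}$. Hence its inverse Fourier transform $K$ is, up to the factor $2^{-l}$, an $L^1$-normalized smooth bump adapted to the dual box of dimensions $2^{-(l+k)}\delta_j^{-1}$ along $v_j$ and $2^{-k}$ in the orthogonal directions.

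The steps, in order, would be: (1) record the symbol $\sigma$ and its support box; (2) prove the derivative estimates $|\partial^\alpha\sigma(\xi)| \les 2^{-l}\,\rho(\xi)^{-\alpha}$ where $\rho$ is the anisotropic ``width'' in direction of $\partial^\alpha$ — i.e. $(2^{l+k}\delta_j)^{-1}$ for derivatives along $v_j$ and $2^{-k}$ for derivatives orthogonal to $v_j$; (3) invoke the standard fact (non-stationary phase / integration by parts) that such a symbol has kernel $K$ with $\|K\|_1 \les 2^{-l}$ and, more precisely, $|K(y)| \les 2^{-l}\,|Q|^{-1}(1 + \mathrm{dist}_Q(y))^{-N}$, where $Q$ is the dual box and $\mathrm{dist}_Q$ is measured in the anisotropic metric of $Q$; (4) observe $Q$ is (comparable to) a rectangle of dimensions $h(1\times\delta_j\times\cdots\times\delta_j)$ with long axis $v_j$ for $h = 2^{-(l+k)}\delta_j^{-1}$, hence convolution against $|K|$ is pointwise dominated by $2^{-l}$ times the strong maximal function $M^{str}_{v_j}$ adapted to such rectangles (after summing the rapidly decaying tails over dyadic dilates of $Q$); (5) conclude, noting that $L_k$ and $R_{W^l_j}$ may be pulled inside, so that the bound reads $|[T_v-T_{v_j}]R_{W^l_j}L_k f(x)| = |K * (R_{W^l_j}L_k f)(x)| \les 2^{-l} M^{str}_{v_j}(R_{W^l_j}L_k f)(x)$ as claimed.

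The main obstacle is step (2)–(3): one must check that the cut-off $\ind_{W^l_j}$ does not destroy the symbol estimates. Since $\ind_{W^l_j}$ is \emph{not} smooth, one cannot differentiate it directly; the fix is to note that $\ind_{W^l_j}$ can be harmlessly replaced by (or absorbed into) a smooth function adapted to the same cone — either because $R_{W^l_j}$ is applied again on its own in the statement (so sharp cut-offs are fine on the $L^2$ side via Plancherel, but here we need a pointwise bound), or, more honestly, because $W^l_j$ is a difference of two cones each of which is a smooth homogeneous region away from the origin, and on the support $|\xi|\sim 2^k$ the boundary cut-off contributes a factor that is bounded with all the ``correct'' anisotropic derivatives. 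The cleanest route is to prove the lemma first with $\ind_{W^l_j}$ replaced by a smooth symbol $\chi^l_j$ supported in $W^{l-1}_j \cup W^l_j \cup W^{l+1}_j$ and identically $1$ on $W^l_j$, obtain the kernel bound, and then write $R_{W^l_j}L_k = R_{W^l_j}(\chi^l_j(D)L_k)$ so that $[T_v-T_{v_j}]R_{W^l_j}L_k = [T_v-T_{v_j}]\chi^l_j(D)L_k\,R_{W^l_j}$; the smooth piece $[T_v-T_{v_j}]\chi^l_j(D)L_k$ then has the desired kernel estimate, and $R_{W^l_j}$ on the far right is harmless for a pointwise bound since it only reappears inside $M^{str}_{v_j}R_{W^l_j}L_k f$ on the right-hand side. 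The remaining care is purely bookkeeping of anisotropic scales, which is routine.
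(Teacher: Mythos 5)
Your proposal is correct and follows essentially the same route as the paper: insert a smooth homogeneous cut-off that equals $1$ on $W^l_j$ (so the sharp restriction $R_{W^l_j}$ stays attached to $f$ and reappears harmlessly inside the maximal function), derive anisotropic symbol/derivative bounds of size $2^{-l}$ adapted to the slab $|v_j\cdot\xi|\sim 2^{k+l}\delta_j$, $|\xi|\sim 2^k$, integrate by parts to get a kernel with product-type decay on the dual rectangle, and dominate the convolution by $2^{-l}M^{str}_{v_j}$. The paper's $\psi\bigl((2^l\delta_j)^{-1}v_j\cdot\xi/|\xi|\bigr)$ is exactly your $\chi^l_j$, and its rescaled bound on $I(\xi)$ and its derivatives is your step (2), so no further comment is needed.
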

Here, $M^{str}_{v_j}$ is a strong maximal function composed with a rotation associated with $v_j$. From the $L^p$-boundedness of the strong maximal function, Lemma \ref{lem:pt} yields 
\[ \sum_{k\in \Z}\norm{\sup_{v\in \om_j} |[T_v-T_{v_j}] R_{W^l_j} L_k f}_{L^2}^2 \les 2^{-2l} \sum_{k\in \Z} \norm{ R_{W^l_j} L_k f}_{L^2}^2 \les 2^{-2l} \norm{ R_{W^l_j} f}_{L^2}^2. \]
Plugging this to \eqref{eqn:CWW}, we get 
\[  \norm{ \sup_{v\in \om_j} |[T_v-T_{v_j}] R_{W^l_j}f|}_2 \les \sqrt{ \log \# \om_j } 2^{-l} \norm{ R_{W^l_j} f}_{L^2}.\]

Thus, 
\begin{align*}
\bigg( \sum_j \norm{ \sup_{v\in \om_j} |[T_v-T_{v_j}] R_{W^l_j}f|}_2^2 \bigg)^{1/2} &\les \sup_j \sqrt{ \log \# \om_j } 2^{-l} \bigg( \sum_{j}  \norm{ R_{W^l_j} f}_2^2 \bigg)^{1/2}\\
&\leq \sup_j \sqrt{ \log \# \om_j } 2^{-l} \norm{ \sum_{j} \ind_{W^l_j} }_{\infty}^{1/2}  \norm{ f}_2.
\end{align*}
Combining this with \eqref{eqn:mid}, we obtain
\[
 \norm{T_\om}_{L^2} \leq \norm{T_\cO}_{L^2} + C \sup_j (\norm{T_{\om_j} }_{L^2}+ \sqrt{ \log \# \om_j }) \sum_{l\geq 0} 2^{-l} \norm{ \sum_{j} \ind_{W^l_j} }_{\infty}^{1/2}.  \]
Arguing as in the proof of \eqref{eqn:orthoM}, one verifies that 
\[ \norm{ \sum_{j} \ind_{W^l_j} }_{\infty} \leq E_l, \]
which gives \eqref{eqn:orthoT}. It only remains to prove Lemma \ref{lem:pt}.

\begin{proof}[Proof of Lemma \ref{lem:pt}]
Let $\psi$ be a smooth function whose value is 1 on the set $\{ t\in \R : (1+c)/2 \leq |t| \leq 1+c \}$ and supported on $\{ t\in \R : \frac{1}{2}+\frac{c}{4} \leq |t| \leq 1+\frac{5}{4}c \}$. Using \eqref{eqn:Wlj}, one can verify that 
\[ \ind_{W^l_j}(\xi) = \psi\left((2^l\delta_j)^{-1} \frac{v_j\cdot \xi}{|\xi|} \right) \ind_{W^l_j}(\xi).\]

Let $\chi(2^{-k} \xi)$ be a smooth compactly supported radial multiplier for a Littlewood-Paley projection operator $\tilde{L_k}$ such that $L_k=\tilde{L_k} L_k$. Then we may write 
\[ [T_v-T_{v_j}] R_{W^l_j} L_k f = K * (R_{W^l_j} L_k f) ,\] where 
\[ K(x) =  \int (m(v\cdot \xi) - m(v_j \cdot \xi) ) \psi\left((2^l\delta_j)^{-1} \frac{v_j\cdot \xi}{|\xi|} \right) \chi(2^{-k} \xi) e^{i x\cdot \xi} d\xi .\]
By a rotation, we may assume that $v_j = e_n$. For the lemma, it suffices to prove that 
\begin{equation}\label{eqn:kernel}
 |K(x)| \les 2^{-l} \frac{2^{kn} 2^l \delta_j}{(1+2^k|x_1|)^2 \cdots (1+2^k| x_{n-1}|)^2 (1+2^k 2^l \delta_j| x_n|)^2}.
\end{equation}

Let $\tilde \xi =(\xi', 2^l \delta_j  \xi_n)$ and make the change of variables $\xi=(\xi',\xi_n) \to 2^k \tilde \xi$. 
Then we may write
\[K(x)= 2^{kn}2^l \delta_j \int (m(2^k v \cdot \tilde{\xi})  - m(2^k e_n \cdot \tilde{\xi}) ) \psi(\xi_n / |\tilde \xi| ) \chi( \tilde{\xi}) e^{i 2^k \inn{x', 2^l\delta_j x_n} \cdot \xi} d\xi .\]
The integrand is supported on the part where $|\xi_n| \sim 1$ and $|\tilde \xi| \sim 1$. We write 
\[ I(\xi) := m(2^k v \cdot \tilde{\xi})  - m(2^k e_n \cdot \tilde{\xi})  = \int_0^1 2^k (v-e_n) \cdot \tilde\xi m'(2^k s(v-e_n)\cdot \tilde \xi + 2^k e_n \cdot \tilde \xi) ds. \]
Since $|(v-e_n)\cdot \tilde \xi| \les \delta_j$ and $| s(v-e_n) \cdot \txi + e_n \cdot \txi  |\sim |e_n \cdot \txi| \sim 2^l \delta_j$, we get $ | I(\xi) | \les 2^{-l} $ for $|\xi_n| \sim |\txi| \sim 1$ from the decay of $m'$. The same upper bound holds for derivatives of $I(\xi)$ and the pointwise estimate \eqref{eqn:kernel} follows from integration by parts.

\end{proof}

\section*{Acknowledgements}
The author is partially supported by a PIMS Postdoctoral Fellowship and a Discovery grant from the Natural Sciences and Engineering Research Council of Canada. He thanks Malabika Pramanik for helpful discussions. He is grateful to the referees for their comments, which have improved the exposition of this paper.

\bibliographystyle{amsplain}

\end{document}